\newcommand{\cf}[1]{\textsf{#1}}
    \tikzset{%
    fwdrxn/.style={very thick, arrows={-Stealth[length=5pt,width=5pt]}},
    revrxn/.style={very thick, arrows={-Stealth[length=5pt,width=5pt,left]}},
    newt/.style={turq, opacity=0.15}
    }
\definecolor{newred1}{HTML}{D83034} 
\definecolor{newblue1}{HTML}{0041C2}
\definecolor{reverse-purple}{HTML}{C701FF}
\crefname{hypothesis}{Hypothesis}{Hypotheses}
\crefname{fact}{Fact}{Facts}
\title{Asymptotic Robustness in Biochemical Systems\thanks{
\funding{This work was supported in part by the Fulbright Program, which sponsored D.~RLL.'s doctoral studies in the United States. G.~C. and D.~RLL. have been also funded in part by the National Science Foundation under grant DMS-2051568.}}}
\author{Hyukpyo Hong\thanks{Department of Mathematics, University of Wisconsin--Madison, Madison, WI, USA (\email{hhong78@wisc.edu}, \email{rojaslaluz@wisc.edu}, \email{craciun@wisc.edu}).}
\and Diego Rojas La Luz\footnotemark[2]
\and Gheorghe Craciun\footnotemark[2] \thanks{Department of Biomolecular Chemistry, University of Wisconsin--Madison, Madison, WI, USA}}
\begin{document}

\maketitle

\begin{abstract}
Living systems maintain stable internal states despite environmental fluctuations. Absolute concentration robustness (ACR) is a striking homeostatic phenomenon in which the steady-state concentration of a species remains invariant despite changes in total supply. Although experimental studies have reported approximate---but not exact---robustness in steady-state concentrations, such behavior has often been attributed to exact ACR motifs perturbed by measurement noise or minor reactions, rather than recognized as a structural property of a network itself. Here, we introduce a previously underappreciated phenomenon, namely {\em asymptotic ACR} ({\em aACR}): approximate robustness can emerge solely from the network structure, without requiring exact ACR motifs or negligible parameters. We find that aACR is more pervasive than classical ACR, as demonstrated in systems such as the \textit{Escherichia coli} EnvZ-OmpR osmoregulation system and a futile cycle. Furthermore, we  prove that such ubiquity stems solely from network structure without fine-tuning of kinetic parameters. The notion of aACR provides a rigorous and practical tool to analyze robust responses in broad biochemical systems.
\end{abstract}

\begin{keywords}
Reaction network, Dynamical system, Robust response, Concentration robustness
\end{keywords}

\begin{MSCcodes}
34C08, 37N25, 92C42
\end{MSCcodes}

\section{Introduction}\label{sec:Introduction}
Biological systems possess a remarkable ability to maintain precise concentrations of key molecules despite environmental fluctuations, and it is essential for sustaining life~\cite{kitano2004biological,kitano2007towards, alon1999robustness, aoki2019universal, barkai1997robustness, Goldstein-homeostasis, alon2019introduction}. This robustness is observed across diverse biochemical contexts, including the Calvin cycle \cite{Wilson-homeostasis}, the EnvZ-OmpR osmoregulation system, various metabolic pathways in {\it Escherichia coli} (\textit{E. coli}) \cite{batchelor2003robustness, laporte1985compensatory, ishii2007multiple}, and pyrimidine metabolism in plants \cite{Leo2021Mechanisms, Michael2005Functional, Peter2005Inhibition}. 
A central challenge in systems biology is to understand how such robustness arises from the underlying network of biochemical systems. 
Various mathematical frameworks have uncovered structural mechanisms that confer such robustness \cite{HART2013213, wang2021structure, shinar2007input, shinar2009sensitivity}, including control-theoretic approaches for robust perfect adaptation \cite{KHAMMASH2021509, gupta2022universal, aoki2019universal, araujo2018topological} and structural sensitivity analyses that are independent of reaction kinetics \cite{PhysRevE.96.022322SM, PhysRevLett.117.048101SM, Hong-fluxRPA, MOCHIZUKI2015189, doi:10.1002/mma.3436, Ferjani2018SciRep}. 

A notable milestone in this line of work is the concept of {\em absolute concentration robustness} (ACR), introduced by Shinar and Feinberg~\cite{ShinarFeinberg2010Science, gunawardena2010biological}. Specifically, a species is said to have ACR if the steady-state concentration of the species remains exactly invariant to changes in initial conditions. 
This sparked a series of investigations on necessary conditions for ACR~\cite{SHINAR201139, eloundou2016network}, its dynamical stability~\cite{joshi-craciun-2022foundations, joshi-craciun-2023-dynamic-acr}, stochastic formulations~\cite{Anderson2017FiniteRobustness, anderson2014stochastic}, and broader theoretical frameworks~\cite{Karp2012complex, GARCIAPUENTE2025102398}. However, experimental observations often reveal a subtler reality: concentrations that are not \textit{strictly} but \textit{approximately} invariant across wide ranges of environmental and internal perturbations \cite{batchelor2003robustness, laporte1985compensatory, kim2012need, Khammash2021PerfectBiology}.

This raises a fundamental question: what underlies this approximate robustness? Is it merely an experimental artifact, or could it reflect a deeper, structural property of the system? These questions remained unresolved because existing theoretical frameworks focused almost exclusively on \textit{exact} robustness, leaving \textit{approximate} robustness relatively unexplored. 
To bridge this conceptual gap, we present a theoretical framework that explains how near-robustness can emerge solely from network structure. Specifically, we here introduce the concept of \textit{asymptotic} ACR (aACR) and mathematically prove that approximate robustness can arise purely from the architecture of a reaction network, without fine-tuning of kinetic parameters and even without a core ACR module. 

This article is structured as follows. 
Section~\ref{sec:biology} demonstrates that aACR more accurately reflects experimental observations than conventional ACR. 
Section~\ref{sec:background} introduces setting and notation. 
Section~\ref{sec:main} illustrates the concept of aACR using a simple, representative network. We also reveal that aACR is ubiquitous and thus substantially more common than conventional ACR, as evidenced by detailed analysis of the \textit{E. coli} EnvZ-OmpR osmoregulation.
Finally, Section~\ref{sec:math-analysis} contains our main theorems that implies this robustness phenomenon arises generically from structural features of biochemical networks, without requiring parameter fine-tuning. Along with the theorems, we provide a general approach based on computational algebra that allows us to identify whether specific input-output pairs of species lead to the the aACR property or not.

\section{Biological motivation: Realistic experimental outputs require consideration of approximate concentration robustness}\label{sec:biology}

\begin{figure}[H]
    \centering
    \includegraphics[width=0.8\linewidth]{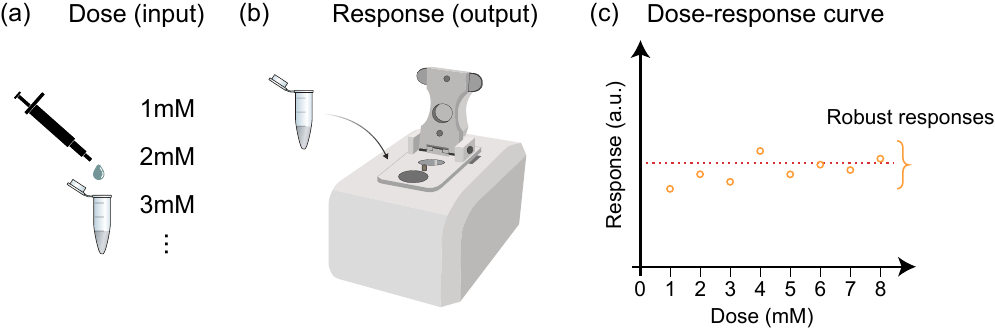}
    \caption{\textbf{Illustration of an experimental setup for assessing concentration robustness.} 
    (\textbf{a}) A series of experiments is conducted by varying the concentration of the dose (i.e., input species). 
    (\textbf{b}) The corresponding steady-state concentrations of the response (i.e., the output species) are measured. 
    (\textbf{c}) The input–output pairs are used to construct a dose–response curve. If the output concentrations remain similar despite changes in the input, the system is considered to exhibit concentration robustness of the output species with respect to the input species.}
    \label{fig:dose-response}
\end{figure}    

A common experimental approach to studying concentration robustness involves measuring how the steady-state concentration of an output species \( X \) responds as the initial concentration of an input species \( Y \) is systematically varied. This procedure yields a dose-response curve, offering insights into the robustness of the system. Specifically, the initial concentration of \( Y \) is systematically varied (e.g., 1 mM, 2 mM, 3mM, $\ldots$) (Fig.~\ref{fig:dose-response}a), and the respective steady-state concentration of \( X \) is measured using a detection device (Fig.~\ref{fig:dose-response}b). The resulting data points are plotted to create a dose-response curve, which reveals how \( X \) responds to changes in \( Y \). If the measured concentrations of \( X \) are nearly identical across all test tubes, despite the varying initial concentrations of \( Y \), one can conclude that \( X \) exhibits concentration robustness, as its concentration remains stable under perturbations to \( Y \) (Fig.~\ref{fig:dose-response}c). 
This experimental setup motivates two key considerations for defining and analyzing concentration robustness. First, in real-world experiments, it is often impractical to vary all initial conditions simultaneously. Instead, experimental protocols typically involve changing one input species at a time while holding others constant \cite{haas2012vivo}. This practical consideration suggests that robustness should be studied with respect to {\em specific} input species or conserved quantities, rather than  requiring invariance across {\em all} possible initial conditions. Second, experimental noise and the limited resolution of measurements make it difficult to distinguish between exact robustness (where the output concentration is exactly the same) and approximate robustness (where the output concentration is nearly but not exactly the same). In both cases, the dose-response curve (i.e., input-output curve) may appear identical, as shown in Fig.~\ref{fig:dose-response}c.  

These observations highlight the need for a more nuanced definition of concentration robustness that captures both exact and approximate behaviors. While traditional definitions of ACR require the output concentration to be exactly the same across all initial conditions, this condition is often too stringent to apply in experimental settings. Instead, we propose a new framework that focuses on \textit{aACR}, where the output concentration asymptotically approaches a fixed value as the input concentration is increased. 

\section{Background}\label{sec:background}
Here, we introduce the basic setup and definitions involving reaction networks in the deterministic regime where a dynamical system is modeled by a system of ordinary differential equations (ODEs).

Consider a biochemical system with $d$ chemical \emph{species} $\{S_1, \ldots, S_d\}$ with which finite number of reactions occur. For the $i$th \emph{reaction}, the vectors representing the number of each chemical species consumed or produced in one instance of that reaction are denoted by $\nu_i \in \mathbb{Z}_{\geq 0}^d$ and $\nu_i'\in \mathbb{Z}_{\geq 0}^d$, respectively. The reaction is often denoted by $\nu_i \to \nu_i'$.
%
Note how from this perspective the reactions are edges between vertices embedded in $\mathbb{Z}_{\geq 0}^d$, where the dimension is given by the number of species.  
With these notions we define a chemical reaction network, which is a graph representation of a system of ODEs. The formal definition is as follows. 

\begin{definition}\label{def:E-graph}
A \emph{chemical reaction network} is a finite simple directed graph $G=(V, E)$, where $V \subseteq \mathbb{Z}_{\geq 0}^d$ is the set of vertices, and $E \subseteq V \times V$ is the set of directed edges. Given an edge $(\nu,\nu')\in E$ we often write $\nu \to \nu'\in E$.
\end{definition}

We introduce more definitions for chemical reaction networks.

\begin{definition}\label{def:stoichiometric}
    The \emph{stoichiometric subspace} for a chemical reaction network $(V,E)$ is defined as
    $$S:={\rm{span}}\{\nu'_i-\nu_i:\nu_i \to \nu'_i \in E\}.$$ 

    For $\mathbf{x}_0 \in \mathbb{R}^d_{>0}$, the \emph{stoichiometric compatibility class} containing $\mathbf{x}_0$ is $\mathbf{x}_0+S$. In other words, the stoichiometric compatibility class containing $\mathbf{x}_0$ is the maximal set that can be reached by the system of ODEs which started from the initial condition $\mathbf{x}_0$.
\end{definition}

In order to connect a chemical reaction network to a system of ODEs, we need to employ a \emph{kinetic law} that characterizes the rule of determining the system of ODEs based on the network structure. Here, we introduce the most fundamental kinetic law, called the \emph{mass-action kinetics}, for the deterministic regime.

\begin{definition}\label{def:MAK}
The deterministic mass-action system associated with a chemical reaction network $(V,E)$ with rate constants $\{\kappa_i\}_{i=1,\ldots,r}$ is the following system of ODEs:
\begin{equation}\label{eq:MAK-ode}
    \frac{d\mathbf{x}(t)}{dt} = \sum_{i=1}^r \kappa_i (\nu_i' - \nu_i)f_i(\mathbf{x}(t))
\end{equation}
with the choice of function $f_i$:
\begin{equation}\label{eq:MAK-f-choice}
    f_i(\mathbf{x}) = \mathbf{x}^{\nu_i} \coloneqq x_1^{\nu_{i1}}\times \cdots \times  x_d^{\nu_{id}}
\end{equation}
where $r = |E|$ and $\mathbf{x}(t)=(x_1(t), \ldots, x_d(t)) \in \mathbb{R}^d_{\geq 0}$ is the vector that represents the amount of the species $S_1, \ldots, S_d$ at time $t$.
\end{definition}

Here, we illustrate these definitions using the following example with two species and two reactions: 
\begin{equation}\label{crn:archetypal}
    \vcenter{\hbox{\begin{tikzpicture}
            \begin{scope}[shift={(0,0)}]
            \node (1a) at (0,0) [left]  {$\cf{X} + \cf{Y}$} ; 
            \node (1b) at (1.25,0) [right] {$2\,\cf{Y}$,}; 
            \draw[fwdrxn, transform canvas={yshift=0.00pt}] (1a)--(1b) node [midway, above] {$\alpha$};
            \end{scope}
            \begin{scope}[shift={(3,0)}]
        \node (2a) at (0,0) [left] {$\cf{Y}$}; 
        \node (2b) at (1.25,0) [right] {$\cf{X}$.};
        \draw[fwdrxn, transform canvas={yshift=0.00pt}] (2a)--(2b) node [midway, above] {$\beta$};
    \end{scope}
        \end{tikzpicture}}}
\end{equation}

Since the first reaction $X+Y \to 2Y$ consumes one instance of chemical species $X$ and $Y$ and then produces two instances of $Y$, then it can be represented as $\nu_1 \to \nu_1'$ where $\nu_1 = [1, 1]^\intercal$ and $\nu_1'=[0,2]^\intercal$.  Similarly, $Y\to X$ is represented by $\nu_2 \to \nu_2'$ where $\nu_2 = [0, 1]^\intercal$ and $\nu_2'=[1,0]^\intercal$. Thus, this reaction network is given by the graph $(V,E)$ embedded in $\mathbb{Z}_{\geq 0}^2$, where $V = \{\nu_1,\nu_1',\nu_2,\nu_2'\}$ and $E = \{\nu_1 \to \nu_1', \nu_2 \to \nu_2'\}$.
Furthermore, the stoichiometric subspace of the chemical reaction network is given by 
\begin{equation*}
    S = \operatorname{span}\{[-1,1]^\intercal, [1,-1]^\intercal\} = \{(s,-s) : s\in \mathbb{R}\} \subset \mathbb{R}^2.
\end{equation*}
In the diagram~\eqref{crn:archetypal}, two parameters $\alpha$ and $\beta$ represent rate constants of the two reactions, respectively, i.e., $\kappa_{X+Y\to2Y} = \alpha$ and $\kappa_{Y \to X} = \beta $. Therefore, the mass-action system associated with the chemical reaction network is given by 
\begin{equation}\label{eq:archetypal-ODE}
    \begin{aligned}
        \dfrac{dX(t)}{dt} & = - \alpha X(t)Y(t) + \beta Y(t),\\
        \dfrac{dY(t)}{dt} & =  \alpha X(t)Y(t) - \beta Y(t),\\
    \end{aligned}
\end{equation}
where $X(t)$ and $Y(t)$ represent the concentrations of the species $X$ and $Y$ at time $t$, respectively, with a slight abuse of notation. 

\section{Main results}\label{sec:main}
In this section, we present our main results. We first demonstrate that ACR and aACR are practically indistinguishable using a simple, representative example. We then show that aACR is ubiquitous in biologically relevant networks. 

\subsection{ACR and aACR are practically indistinguishable}

\begin{figure}[h]
    \centering
    \includegraphics[width=0.8\linewidth]{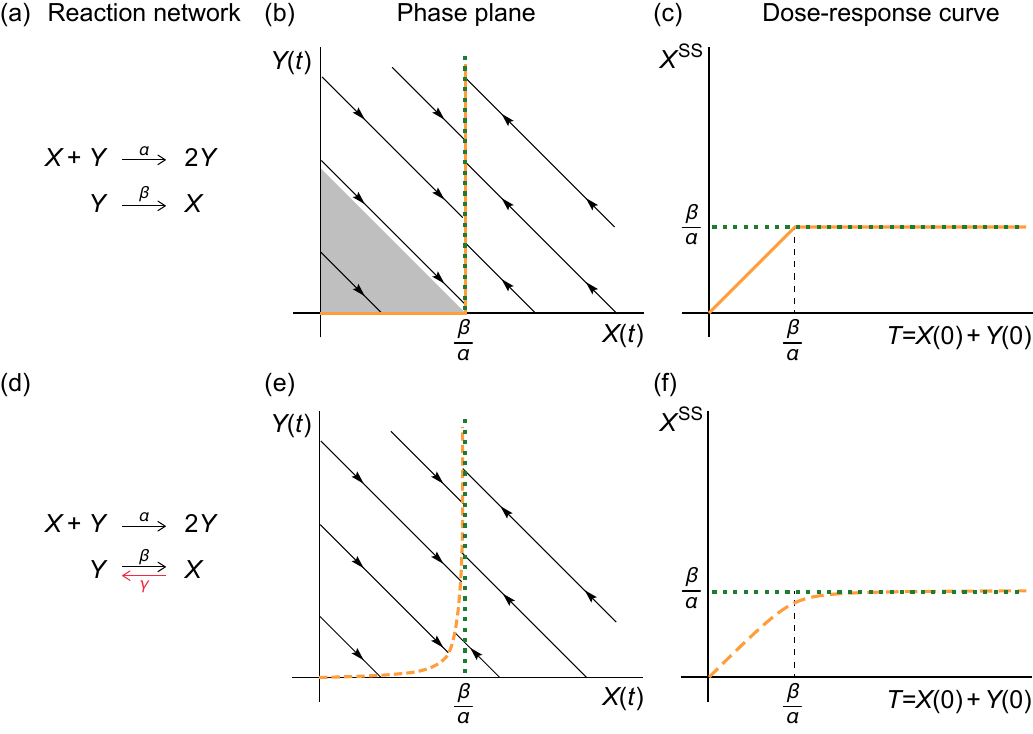}
    \caption{
    \textbf{Phase planes and dose-response curves of the reaction networks in \eqref{crn:archetypal} and \eqref{crn:archetypal-modified}. } 
    (\textbf{a}) The reaction network diagram given in~\eqref{crn:archetypal}.  
    (\textbf{b}) The phase plane describing the dynamics of the reaction network in \eqref{crn:archetypal}. Orange lines represent the set of steady states. 
    (\textbf{c}) The dose-response curve of the steady-state concentration of $X$, denoted by $X^{\text{SS}}$, with respect to the conserved quantity in the reaction network, $T=X(0)+Y(0)$. 
    (\textbf{d}) A reaction network modified from (a) by adding one more reaction, $X \to Y$, given in~\eqref{crn:archetypal-modified}. 
    (\textbf{e}) The phase-plane of the modified network. 
    (\textbf{f}) The dose-response curve the modified network. Although $X^{\text{SS}}$ no longer becomes a constant function, it converges to the same value. Notably, the two dose-response curves in (c) and (f) are nearly indistinguishable.}
    \label{fig:archetypal}
\end{figure}

To illustrate the classical concept of ACR and our newly proposed concept of aACR and their differences, we analyze the simple reaction network in~\eqref{crn:archetypal} (Fig.~\ref{fig:archetypal}a). 
The first reaction, $X+Y \to 2Y$, can be seen as an autoactivation reaction where an inactive form of an enzyme ($X$) is activated by interacting with an active form ($Y$). The second reaction, $Y \to X$, can be seen as an inactivation of the active form $Y$. 
Since there is no pure production or decay of the enzymes, the total concentration of enzymes is always a constant, i.e., $X(t)+Y(t) = X(0)+Y(0) = T$ for all $t \geq 0$. 
Dynamics of the concentrations $X(t)$ and $Y(t)$ can be described by the system of ODEs as in~\eqref{eq:archetypal-ODE}.


The time evolution of the concentrations can be depicted on the phase plane (Fig.~\ref{fig:archetypal}b). On the phase plane of this reaction network, every point with positive initial concentrations of both $X$ and $Y$ eventually converge to the set of steady states (orange lines; Fig.~\ref{fig:archetypal}b). Specifically, except for the small region of low total concentrations (gray region; Fig.~\ref{fig:archetypal}b), every initial point converges to a steady state lying on a vertical line (dotted vertical line; Fig.~\ref{fig:archetypal}b). That is, wherever an initial point is, its steady state has the same concentration of $X$. This implies that the steady-state concentration of $X$, denoted by $X^{\text{SS}}$, is independent of the overall supply of this system (i.e., the total enzyme concentration). When this independence appears, we say ``$X$ has ACR.'' This phenomenon of ACR is achieved in networks with specific structural properties revealed by Shinar and Feinberg \cite{ShinarFeinberg2010Science}. Consequently, the dose-response curve of $X^{\text{SS}}$ with respect to the total enzyme concentration, $T$, reaches a fixed value (i.e., the horizontal line) once $T$ exceeds a threshold (Fig.~\ref{fig:archetypal}c), which indicates that $X$ admits ACR.

While exhibiting ACR in a certain species serves as a valuable information indicating that the steady-state concentration of the species is completely independent of an initial condition, this property can easily be lost with slight modifications to the network structure. Here, we modify the network in Fig.~\ref{fig:archetypal}a by adding the reverse of the second reaction and get the following chemical reaction network (Fig.~\ref{fig:archetypal}d):

\begin{equation}\label{crn:archetypal-modified}
    \vcenter{\hbox{\begin{tikzpicture}
            \begin{scope}[shift={(0,0)}]
            \node (1a) at (0,0) [left]  {$\cf{X} + \cf{Y}$} ; 
            \node (1b) at (1.25,0) [right] {$2\,\cf{Y}$,}; 
            \draw[fwdrxn, transform canvas={yshift=0.00pt}] (1a)--(1b) node [midway, above] {$\alpha$};
            \end{scope}
            \begin{scope}[shift={(3,0)}]
        \node (2a) at (0,0) [left] {$\cf{Y}$}; 
        \node (2b) at (1.25,0) [right] {$\cf{X}$.};
        \draw[fwdrxn, transform canvas={yshift=1.75pt}] (2a)--(2b) node [midway, above] {$\beta$};
        \draw[fwdrxn, red, transform canvas={yshift=-1.75pt}] (2b)--(2a) node [midway, below] {$\mathcolor{red}{\gamma}$};
    \end{scope}
        \end{tikzpicture}}}
\end{equation}

We obtained the phase plane and dose-response curve for this modified network as well (Fig.~\ref{fig:archetypal}e,f). Due to the modification, the set of steady states and thus the robustness of $X^{\text{SS}}$ were altered. 
Specifically, the set of steady states is not a vertical line but rather an asymptotic curve (orange curve; Fig.~\ref{fig:archetypal}e) that approaches a vertical line (dotted vertical line; Fig.~\ref{fig:archetypal}e). 
Accordingly, the dose-response curve of $X^{\text{SS}}$ with respect to $T$ reaches a fixed value (i.e., the horizontal line) asymptotically as $T$ becomes large (Fig.~\ref{fig:archetypal}f).
This indicates that the steady-state concentration of $X$ is not exactly a constant, but it approaches a constant asymptotically as the total enzyme concentration increases. Although the steady-state concentration of $X$ is not completely independent of the total concentration $T$, {\em its dependence on $T$ is negligible when $T$ is large enough}. 
(Note that even when $X$ has ACR, $X^{\text{SS}}$ {\em does} depend on $T$ when $T$ is small; see Fig.~\ref{fig:archetypal}c.)
Therefore, we say ``$X$ has \textit{aACR} with respect to $T$'' when this asymptotic independence appears (see Definition~\ref{def:aACR} for a precise definition).

Although the two networks do not show  exactly the same dose-response curves (Fig.~\ref{fig:archetypal}c,f), it is noteworthy that there is no meaningful difference for practical experimental purposes. In particular, as illustrated in Fig.~\ref{fig:dose-response}, a dose-response curve is commonly measured with noise and limited data resolution. Thus, it is nearly impossible to determine whether an observed flat dose-response curve with discrete data points originates from a perfectly horizontal line with ACR (Fig.~\ref{fig:archetypal}c) or a nearly horizontal plateau with aACR (Fig.~\ref{fig:archetypal}f). This means that identifying networks showing aACR is as important as identifying networks with ACR. Moreover, the aACR of $X$ arises solely from the network structure, independently of the rate constant $\gamma$ associated with the added reverse reaction, $Y \leftarrow X $. 

In what follows, we calculate steady-state solutions for the examples in~\eqref{crn:archetypal} and~\eqref{crn:archetypal-modified} (Fig.~\ref{fig:archetypal}a,d). More detailed derivations are given in \cref{subsec:model-derivation}. 
First, the archetypal network in~\eqref{crn:archetypal} has the following steady-state equation 
%
\begin{equation}\label{eq:archetypal-SS-1}
    \begin{aligned}
        0 & = - \alpha X^{\text{SS}}Y^{\text{SS}} + \beta Y^{\text{SS}},\\
        0 & =  \alpha X^{\text{SS}}Y^{\text{SS}} - \beta Y^{\text{SS}},\\
    \end{aligned}
\end{equation}
because the steady state is described by setting the time derivatives in~\eqref{eq:archetypal-ODE} zero.  With the conservation law $T = X(t) + Y(t) = X(0) + Y(0)$ of the system, the positive steady states of this system are given by (Fig.~\ref{fig:archetypal}b):
\begin{equation}\label{eq:archetypal-SS-2}
    (X^{\text{SS}}, Y^{\text{SS}}) = \begin{cases}
        \left(\dfrac{\beta}{\alpha}, T - \dfrac{\beta}{\alpha}\right), & \text{for } \,  T\geq \dfrac{\beta}{\alpha}, \\ 
        (T,0), & \text{for } \,  T < \dfrac{\beta}{\alpha}.
    \end{cases}
\end{equation}

Now, let us focus on the modified network shown in \eqref{crn:archetypal-modified} (Fig.~\ref{fig:archetypal}d). The corresponding mass-action system is the following system of ODEs:
\begin{equation}
    \begin{aligned}
        \dfrac{dX(t)}{dt} & = - \alpha X(t)Y(t) + \beta Y(t) - \gamma X(t),\\
        \dfrac{dY(t)}{dt} & =  \alpha X(t)Y(t) - \beta Y(t) + \gamma X(t).\\
    \end{aligned}
\end{equation}
It is still true that we have $T = X(t) + Y(t) = X(0)+Y(0)$ constant. The steady state of $X$ in terms of $T$ then is given by (Fig.~\ref{fig:archetypal}e):
\begin{align}
    X^{\text{SS}} = \dfrac{1}{2\alpha} (\beta + \gamma + \alpha T - \sqrt{(\alpha T - \beta + \gamma)^2 + 4\beta\gamma})
\end{align}
which as $T$ goes to infinity has a limit of $\frac{\beta}{\alpha}$, which shows that the species $X$ exhibits aACR.


\subsection{aACR is ubiquitous in real biochemical systems, even without core ACR motifs}
In the examples shown in Fig.~\ref{fig:archetypal}, both ACR and aACR were analyzed in terms of variations in the total concentration \( T \), which is the only conserved quantity in the system. However, a subtle yet important distinction between the two concepts arises when considering systems with multiple conserved quantities. 
The conventional notion of ACR refers to the property by which a species maintains exactly the same steady-state concentration across \textit{all} initial conditions, regardless of variations in the conserved quantities \cite{ShinarFeinberg2010Science}. 
In contrast, aACR must be analyzed by varying each conserved quantity individually, as the steady-state behavior of a system may differ depending on which quantity is perturbed. This distinction did not arise in Fig.~\ref{fig:archetypal}, where only a single conserved quantity was present, but it becomes critical in systems such as the EnvZ-OmpR network (Fig.~\ref{fig:biochem-example}), which involves two conserved quantities: the total concentrations of \( X \)-containing and \( Y \)-containing species.

\begin{figure}[h]
    \centering
    \includegraphics[width=0.8\linewidth]{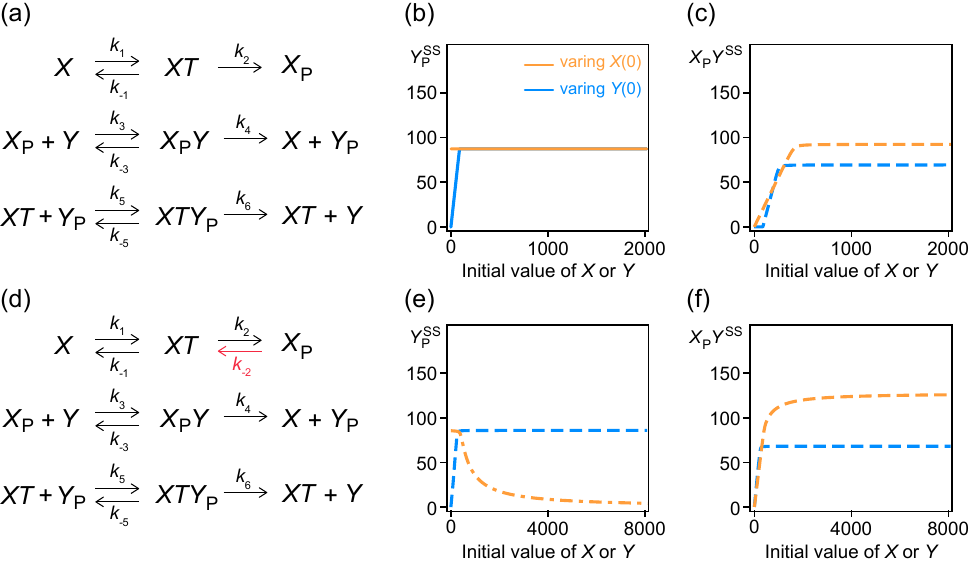}
    \caption{\textbf{Biologically relevant networks showing ACR and aACR phenomena.} 
    (\textbf{a}) The EnvZ-OmpR signaling network, a well-studied biochemical system, serves as a key example of a network with ACR in species $Y_\text{P}$.
    (\textbf{b}) Dose-response curves for $Y_\text{P}$ show that its steady-state concentration reaches a fixed value (horizontal line) when varying the initial concentration of $X$ (orange curve) or $Y$ (blue curve), demonstrating ACR. 
    Here, $Y_\text{P}^{\text{SS}}$ denotes the steady-state concentration of $Y_\text{P}$. Note that these two curves overlap as they reach the same $Y_\text{P}^{\text{SS}}$ value.
    (\textbf{c}) In contrast, $X_\text{P}Y$ exhibits aACR, denoted by dashed lines. This is demonstrated by the dose-response curve of its steady-state concentration, denoted by $X_\text{P}Y^{\text{SS}}$, approaching but never precisely reaching a fixed value as the initial concentrations of $X$ or $Y$ are increasing.
    (\textbf{d}) A network modified from the network in (a) by making one irreversible reaction, $XT \to X_\text{P}$, reversible.
    (\textbf{e}) In this modified network, $Y_\text{P}$ no longer exhibits ACR: its steady-state concentration, $Y_\text{P}^{\text{SS}}$, approaches zero when varying $X(0)$ and shows aACR when varying $Y(0)$.
    (\textbf{f}) Despite this modification, $X_\text{P}Y$ retains aACR, highlighting the robustness of this property compared to ACR.}
    \label{fig:biochem-example}
\end{figure}

To better align with experimental practices \cite{haas2012vivo} as illustrated in Fig.~\ref{fig:dose-response}, we henceforth define aACR \textit{with respect to a specific species} (see Definition~\ref{def:aACR} for a precise definition). Specifically, the steady-state concentration of an output species is analyzed as the initial concentration of a particular input species is varied, while keeping all other initial concentrations fixed.

The EnvZ-OmpR osmoregulation system in \textit{E. coli} is a well-studied example of a two-component signaling network (Fig.~\ref{fig:biochem-example}a), where robustness in molecular concentrations plays a critical role in cellular function.
This system consists of the sensor kinase EnvZ (denoted as \( X \)) and the response regulator OmpR (denoted as \( Y \)). Both proteins can exist in phosphorylated forms, \( X_\text{P} \) and \( Y_\text{P} \), which are central to the signaling process. The kinase \( X \) autophosphorylates using adenosine triphosphate (ATP) as a phosphate donor, and \( X_\text{P} \) subsequently transfers the phosphoryl group to \( Y \), forming \( Y_\text{P} \). Dephosphorylation of \( Y_\text{P} \) is catalyzed by \( X \) in the presence of ATP or adenosine diphosphate (ADP). The phosphorylated response regulator \( Y_\text{P} \) acts as a transcription factor, regulating the expression of genes involved in osmolarity adaptation, making its concentration crucial for proper cellular function.  

Importantly, the network conserves the total concentrations of \( X \)-containing \allowbreak species, \( T_X = X(t) + XT(t) + X_\text{P}(t) + X_\text{P}Y(t) + XTY_\text{P}(t) \), and that of \( Y \)-containing species, \( T_Y = Y(t) + Y_\text{P}(t) + X_\text{P}Y(t) + XTY_\text{P}(t) \). 
Previous studies have shown that this network exhibits ACR in \( Y_\text{P} \), meaning that the steady-state concentration of \( Y_\text{P} \) is independent of initial conditions \( X(0) \) and \( Y(0) \) and, equivalently, of the total concentrations \( T_X \) and \( T_Y \) \cite{ShinarFeinberg2010Science}. Accordingly, the dose-response curves of the steady-state concentration of $Y_\text{P}$, denoted by $Y_\text{P}^{\text{SS}}$, with respect to varying initial conditions $X(0)$ and $Y(0)$ form perfect horizontal lines (Fig.~\ref{fig:biochem-example}b). 
In contrast, we found that other species in the network, such as \( X_\text{P}Y \), exhibit aACR, as illustrated by the dose-response curves that asymptotically approach a fixed value as \( X(0) \) or \( Y(0) \) increases (Fig.~\ref{fig:biochem-example}c). This indicates that aACR is already present in this network, although it had not been previously recognized. These findings highlight the importance of considering aACR when studying biological robustness, as it may be more prevalent than ACR in real-world systems.

Since the condition for a reaction network to achieve ACR revealed in \cite{ShinarFeinberg2010Science} is sensitive to network structure, we investigated how vulnerable ACR is to slight structural modifications. To this end, we made a minimal change to the EnvZ-OmpR network by adding a reverse reaction to one of its irreversible reactions, $XT \to X_\text{P}$ (Fig.~\ref{fig:biochem-example}d). 
In the modified network, the behavior of $Y_\text{P}$ changes dramatically (Fig.~\ref{fig:biochem-example}e). While it previously exhibited ACR, it now either approaches zero when varying $X(0)$ or shows aACR when varying $Y(0)$ (orange and blue curves in Fig.~\ref{fig:biochem-example}e). This demonstrates how sensitive ACR can be to even minor structural modifications. In contrast, the concentration of $X_\text{P}Y$ retains its aACR behavior (Fig.~\ref{fig:biochem-example}f), indicating the persistence of this property.

To systematically explore these observations, we analyzed the behavior of all species in the network with respect to varying the initial concentration of each species. Tables~\ref{tab:SF-2B-unmodified} and \ref{tab:SF-2B-modified-1} summarize these results for the unmodified and modified networks, respectively. The possible behaviors are categorized as: ACR, aACR, divergence (denoted by $\nearrow \infty$), or extinction (i.e., approaching 0, denoted by $\searrow0$). 

In the unmodified network (Fig.~\ref{fig:biochem-example}a), ACR is observed only in $Y_\text{P}$ as expected, while aACR is pervasive across several other species, including $X_\text{P}Y$ (Table~\ref{tab:SF-2B-unmodified}). In the modified network (Fig.~\ref{fig:biochem-example}d), all instances of ACR disappeared, with some transitioning to aACR and extinction (Table~\ref{tab:SF-2B-modified-1}). Notably, most instances of aACR persist, verifying its prevalence and relative robustness compared to ACR. 
These characteristics of aACR can repeatedly be observed in many other modified networks and even other biological systems, including a phospho-dephosphorylation futile cycle (Fig.~\ref{Supp-fig:additional-networks}, Tables~\ref{Supp-tab:FutCyc-unmodified} and \ref{Supp-tab:FutCyc-modified-3})
\cite{ShinarFeinberg2010Science, conradi2019multistationarity, goldbeter1981amplified, kholodenko2000negative}.

Furthermore, we mathematically proved that the widespread emergence of aACR arises from intrinsic features of network structures, independent of parameter fine-tuning (Theorems~\ref{thm:main_theorem} and \ref{thm:second_theorem}). In particular, any system with at least one positive conserved quantity is guaranteed to exhibit aACR. Finally, we developed an algebraic method to identify all entries in Tables~\ref{tab:SF-2B-unmodified} and \ref{tab:SF-2B-modified-1}, without relying on numerical simulations (see \cref{sec:application-characterization} for details). 

\begin{table}[h]
    \centering
    \caption{\textbf{Summary of steady-state concentration responses of single species with respect to a increasing initial value of another species for the network in Fig.~\ref{fig:biochem-example}a.}
    The four possible behaviors, ACR, aACR, divergence, or extinction (i.e., approaching 0), are denoted by ACR, aACR, $\nearrow\infty$, and $\searrow0$ in the table. Although the table was obtained from numerical simulation results, all entries can also be identified using a method based on computational algebraic geometry. See Supplementary Text for a detailed description about the method.}
    \begin{tabular}{c|c|c|c|c|c|c|c|}
         & $X^{\text{SS}}$ & $XT^{\text{SS}}$ & $X_\text{P}^{\text{SS}}$ & $Y^{\text{SS}}$ & $Y_\text{P}^{\text{SS}}$ & $X_\text{P}Y^{\text{SS}}$ & $XTY_\text{P}^{\text{SS}}$ \\ \hline
        $X(0)$ & \textcolor{newred1}{aACR} & \textcolor{newred1}{aACR} & $\nearrow\infty$ & $\searrow$0 &\textcolor{newblue1}{ACR}& \textcolor{newred1}{aACR} & \textcolor{newred1}{aACR} \\ \hline
        $XT(0)$ & \textcolor{newred1}{aACR} & \textcolor{newred1}{aACR} & $\nearrow\infty$ & $\searrow$0 &\textcolor{newblue1}{ACR}& \textcolor{newred1}{aACR} & \textcolor{newred1}{aACR} \\ \hline
        $X_\text{P}(0)$ & \textcolor{newred1}{aACR} & \textcolor{newred1}{aACR} & $\nearrow\infty$ & $\searrow$0 &\textcolor{newblue1}{ACR}& \textcolor{newred1}{aACR} & \textcolor{newred1}{aACR} \\ \hline
        $Y(0)$ & \textcolor{newred1}{aACR} & \textcolor{newred1}{aACR} & $\searrow$0 & $\nearrow\infty$ &\textcolor{newblue1}{ACR}& \textcolor{newred1}{aACR} & \textcolor{newred1}{aACR} \\ \hline
        $Y_\text{P}(0)$ & \textcolor{newred1}{aACR} & \textcolor{newred1}{aACR} & $\searrow$0 & $\nearrow\infty$ &\textcolor{newblue1}{ACR}& \textcolor{newred1}{aACR} & \textcolor{newred1}{aACR} \\ \hline
        $X_\text{P}Y(0)$ & $\nearrow\infty$ & $\nearrow\infty$ & \textcolor{newred1}{aACR} & $\nearrow\infty$ &\textcolor{newblue1}{ACR}& $\nearrow\infty$ & $\nearrow\infty$ \\ \hline
        $XTY_\text{P}(0)$ & $\nearrow\infty$ & $\nearrow\infty$ & \textcolor{newred1}{aACR} & $\nearrow\infty$ &\textcolor{newblue1}{ACR}& $\nearrow\infty$ & $\nearrow\infty$ \\ \hline
    \end{tabular}
    \label{tab:SF-2B-unmodified}
\end{table}

\begin{table}[h]
    \centering
    \caption{
    \textbf{Summary of steady-state concentration responses for the network in Fig.~\ref{fig:biochem-example}d.}
    The yellow-colored cells show where the steady-state behavior has changed when compared with the unmodified network, while transparent cells have the same steady-state behaviors as the unmodified network (see Table~\ref{tab:SF-2B-unmodified}.) A majority of the steady-state behaviors are maintained after this modification.}
    \begin{tabular}{c|c|c|c|c|c|c|c|}
         & $X^{\text{SS}}$ & $XT^{\text{SS}}$ & $X_\text{P}^{\text{SS}}$ & $Y^{\text{SS}}$ & $Y_\text{P}^{\text{SS}}$ & $X_\text{P}Y^{\text{SS}}$ & $XTY_\text{P}^{\text{SS}}$ \\ \hline
        $X(0)$ & \cellcolor{yellow!30} $\nearrow\infty$ & \cellcolor{yellow!30}$\nearrow\infty$ & $\nearrow\infty$ & $\searrow$0 & \cellcolor{yellow!30}$\searrow$0 & \textcolor{newred1}{aACR} & \textcolor{newred1}{aACR} \\ \hline
        $XT(0)$  & \cellcolor{yellow!30}$\nearrow\infty$ & \cellcolor{yellow!30}$\nearrow\infty$ & $\nearrow\infty$ & $\searrow$0 & \cellcolor{yellow!30}$\searrow$0 & \textcolor{newred1}{aACR} & \textcolor{newred1}{aACR} \\ \hline
        $X_\text{P}(0)$  & \cellcolor{yellow!30}$\nearrow\infty$ & \cellcolor{yellow!30}$\nearrow\infty$ & $\nearrow\infty$ & $\searrow$0 & \cellcolor{yellow!30}$\searrow$0 & \textcolor{newred1}{aACR} & \textcolor{newred1}{aACR} \\ \hline
        $Y(0)$ & \textcolor{newred1}{aACR} & \textcolor{newred1}{aACR} & $\searrow$0 & $\nearrow\infty$ &\cellcolor{yellow!30} \textcolor{newred1}{aACR} & \textcolor{newred1}{aACR} & \textcolor{newred1}{aACR} \\ \hline
        $Y_\text{P}(0)$ & \textcolor{newred1}{aACR} & \textcolor{newred1}{aACR} & $\searrow$0 & $\nearrow\infty$ & \cellcolor{yellow!30} \textcolor{newred1}{aACR} & \textcolor{newred1}{aACR} & \textcolor{newred1}{aACR} \\ \hline
        $X_\text{P}Y(0)$ & $\nearrow\infty$ & $\nearrow\infty$ & \textcolor{newred1}{aACR} & $\nearrow\infty$ &\cellcolor{yellow!30} \textcolor{newred1}{aACR} & $\nearrow\infty$ & $\nearrow\infty$ \\ \hline
        $XTY_\text{P}(0)$ & $\nearrow\infty$ & $\nearrow\infty$ & \textcolor{newred1}{aACR} & $\nearrow\infty$ &\cellcolor{yellow!30} \textcolor{newred1}{aACR} & $\nearrow\infty$ & $\nearrow\infty$ \\ \hline
    \end{tabular}
    \label{tab:SF-2B-modified-1}
\end{table}

\section{Mathematical analysis ensuring the ubiquity of aACR phenomena}\label{sec:math-analysis}
In this section, we present the mathematical definitions, rigorous theorems, and proofs that form the foundation of the results described above. We also illustrate, through representative examples, how the algebraic methods developed in the course of these proofs enable the complete characterization summarized in Tables~\ref{tab:SF-2B-unmodified} and \ref{tab:SF-2B-modified-1}.

\subsection{Mathematical preliminaries and notation}
\paragraph{Notational remark.} Here, we use both $A(t)$ and $[A](t)$ to denote the concentration of species $A$ at time $t$. The time argument ``$(t)$'' is often omitted when no confusion arises. Square brackets are employed to distinguish between expressions like $[A][B]$ and $[AB]$ in the presence of species $A$, $B$, and their binding complex $AB$. For notational convenience, we use both $\overline{A}$ (or $[\overline{A}]$) and $A^{\text{SS}}$ to represent the steady-state concentration of species $A$.


\begin{definition}\label{def:dose-response}
    We say that a reaction system  {\em has well defined dose-response curves for input $X_i$} if 
    there exists a unique positive equilibrium within the stoichiometric compatibility class that contains the initial concentration $\mathbf{x}_0 + \lambda \mathbf{e}_i$ for all $\lambda$ large enough, where $\mathbf{e}_i$ is the $i^{th}$ element of the standard basis of $\mathbb{R}^n$. The function that maps the number $\lambda$ to the $j^{th}$ coordinate of the equilibrium vector within the stoichiometric compatibility class of $\mathbf{x}_0 + \lambda \mathbf{e}_i$ is called the {\em dose-response curve of $X_j$ with respect to $X_i$}. That is, this function maps $\lambda$ to $\overline{X}_j(\lambda)$.
\end{definition}

\begin{definition}\label{def:aACR} Consider a reaction system that  has well defined dose-response curves for input $X_i$. 
A species $X_j$ is said to \emph{admit asymptotic ACR (aACR) with respect to $X_i$} if there exists $a_{i,j} > 0$ such that for any $\mathbf{x}_0 \in \mathbb{R}_{>0}^n$, we have
    \begin{equation}
        \lim_{\lambda \to \infty} \overline{X}_j(\lambda) = a_{i,j},
    \end{equation}
    where $\overline{X}_j(\lambda)$ is the steady state of species $X_j$ for the system with initial concentration $\mathbf{x}_0 + \lambda \mathbf{e}_i$. In other words, we say $X_i$ admits aACR if the dose-response curve of $X_j$ with respect to $X_i$ converges to a positive real number as $\lambda \to \infty$.
\end{definition}

We present a theorem ensuring the ubiquity of aACR and another theorem providing a tool for analyzing steady-state behavior of individual species in a biochemical system. Consider a reaction system given by
\begin{align}\label{eq:CRN}
    \dfrac{dX_1}{dt}&= f_1(X_1,\dots,X_n)\nonumber\\
    \vdots\\
    \dfrac{dX_n}{dt} &= f_n(X_1,\dots,X_n)\nonumber
\end{align}
where $f_i(X_1,\dots,X_n)$ are rational functions, with conservation laws
\begin{align}\label{eq:cons-law}
    \sum_{i=1}^n\alpha_i^{(1)}X_i(t) &= \sum_{i=1}^n\alpha_i^{(1)}X_i(0)\nonumber\\
    \vdots\\
    \sum_{i=1}^n\alpha_i^{(m)}X_i(t) &= \sum_{i=1}^n\alpha_i^{(m)}X_i(0)\nonumber
\end{align}

\subsection{Theorem statements and proofs}

Our main theorem (Theorem~\ref{thm:main_theorem}) essentially states that any positive conservation law with proper support can be used to guarantee the following: for any species $X_i$ that {\em does not} show up in the conservation law, there exists at least one species $X_j$ that {\em does} show up in it and admits aACR with respect to $X_i$. Here, a positive conservation law means its associated coefficients $\alpha_i$'s are all nonnegative (see \eqref{eq:cons-law}). 

\begin{theorem}\label{thm:main_theorem}
    Consider a reaction system given by \eqref{eq:CRN} that has well defined dose-response curves for input $X_i$. Assume that this system  admits a positive conservation law with support set $\Sigma = \{X_{j_1},\dots,X_{j_k}\}$, such that $X_i$ is not in $\Sigma$.
    Then there exists at least one species $X_j$ within $\Sigma$ such that $X_j$ admits aACR with respect to $X_i$.
\end{theorem}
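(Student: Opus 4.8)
The plan is to extract the one structural consequence the hypotheses hand us for free: since $X_i\notin\Sigma$, increasing the dose of $X_i$ cannot change the value of the positive conserved quantity, so every species in $\Sigma$ stays trapped in a fixed bounded box no matter how large $\lambda$ grows. Concretely, write the chosen positive conservation law as $\sum_{X_j\in\Sigma}\alpha_j X_j=C$ with $\alpha_j>0$ for $X_j\in\Sigma$ and $\alpha_i=0$. First I would note that along the inputs $\mathbf{x}_0+\lambda\mathbf{e}_i$ the conserved value is $\sum_{X_j\in\Sigma}\alpha_j(\mathbf{x}_0)_j+\lambda\alpha_i=\sum_{X_j\in\Sigma}\alpha_j(\mathbf{x}_0)_j=:C$, independent of $\lambda$ precisely because $\alpha_i=0$. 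Evaluating the law at the unique positive equilibrium $\overline{X}(\lambda)$ (which exists for large $\lambda$ by the well-definedness hypothesis of Definition~\ref{def:dose-response}) gives $\sum_{X_j\in\Sigma}\alpha_j\overline{X}_j(\lambda)=C$, and since all $\alpha_j>0$ and all $\overline{X}_j(\lambda)>0$, each summand is at most $C$, so $0<\overline{X}_j(\lambda)\le C/\alpha_j$ for every $X_j\in\Sigma$.

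Next I would upgrade these uniform bounds to genuine convergence and then read off positivity. The positive equilibrium solves the polynomial system obtained by clearing denominators in $f_1=\cdots=f_n=0$ together with the affine conservation constraints, into which $\lambda$ enters only as an additive constant; hence the graph $\{(\lambda,\overline{X}(\lambda))\}$ is a semialgebraic set and each coordinate $\overline{X}_j(\cdot)$ is a semialgebraic function of the single variable $\lambda$. A bounded semialgebraic function on $(M,\infty)$ is eventually monotone and therefore has a finite limit (equivalently, one reads the limit off its Puiseux expansion at infinity), so $L_j:=\lim_{\lambda\to\infty}\overline{X}_j(\lambda)$ exists and is finite for every $X_j\in\Sigma$. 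Passing to the limit in $\sum_{X_j\in\Sigma}\alpha_j\overline{X}_j(\lambda)=C$ yields $\sum_{X_j\in\Sigma}\alpha_j L_j=C>0$; because every $\alpha_j>0$, the limits cannot all vanish, so some $X_j\in\Sigma$ satisfies $L_j>0$, which is exactly aACR of $X_j$ with respect to $X_i$ in the sense of Definition~\ref{def:aACR}.

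I expect the genuine obstacle to be the convergence step, rather than the bound or the positivity, both of which are short. The difficulty is ruling out that a bounded dose-response curve oscillates as $\lambda\to\infty$, and this is where I would invest the technical effort, leaning on the algebraic description of the steady-state locus and the monotonicity theorem of o-minimality (or a direct Puiseux-series argument), which also meshes with the computational-algebra characterization developed elsewhere in the paper. A secondary point to handle with care is the identification of the limit $L_j$ with the constant $a_{i,j}$ of Definition~\ref{def:aACR}: the conservation-law argument produces a positive limit for each fixed base point $\mathbf{x}_0$, so I would track how $L_j$ depends on the retained conserved total $C$ to confirm the conclusion in the precise form asserted.
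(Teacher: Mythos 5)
Your proof is correct, and its overall skeleton matches the paper's: semialgebraicity of the dose-response curve via Tarski--Seidenberg, existence of a finite limit for each species in $\Sigma$, then positivity forced by passing to the limit in $\sum_{X_j\in\Sigma}\alpha_j\overline{X}_j(\lambda)=C>0$. The genuine difference is in the convergence step, which you rightly identify as the crux. The paper first establishes that $\overline{X}_j(\lambda)$ is continuous and algebraic (via a decomposition of its semialgebraic graph into finitely many diffeomorphically parameterized curves) and then proves a bespoke result, Lemma~\ref{lem:LimitExists}: a continuous $x(\lambda)$ satisfying $P(x(\lambda),\lambda)\equiv 0$ has a limit in $[-\infty,\infty]$, shown by a $\liminf$/$\limsup$ argument using the Intermediate Value Theorem and the fact that the leading-coefficient polynomial $q(x)$ has only finitely many roots; finiteness of the limit then comes from the conservation law, as in your argument. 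You instead extract boundedness from the conservation law first and invoke the monotonicity theorem for semialgebraic functions (equivalently, a Puiseux expansion at infinity) to get convergence. Your route is shorter and sidesteps both the continuity discussion and the ad hoc lemma, at the cost of citing heavier standard machinery; the paper's route is more self-contained, and its lemma buys more than convergence---it identifies any finite limit as a root of $q(x)$, which is exactly what powers Remark~\ref{rmk:root-of-q}, Theorem~\ref{thm:second_theorem}, and the explicit limit computations in \cref{sec:application-characterization}, so the paper would need that lemma anyway. Finally, the caveat you raise about Definition~\ref{def:aACR} (the constant $a_{i,j}$ is quantified over all $\mathbf{x}_0$, while the conservation-law argument fixes $\mathbf{x}_0$ and yields a limit that may depend on the retained conserved totals) applies verbatim to the paper's own proof---indeed the paper's worked example produces limits such as $(T_2-c)/d$ that depend on $\mathbf{x}_0$---so this is a looseness in how the definition is stated versus used, not a gap in your argument relative to the paper's.
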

In order to prove this theorem, we first discuss the following useful lemma. The proof of Lemma~\ref{lem:LimitExists} follows its statement below, the proof of Theorem~\ref{thm:main_theorem} is in Appendix. 

\begin{lemma}\label{lem:LimitExists}
    Let $P(x,\lambda)$ be a polynomial with real variables $x$ and $\lambda$, and consider a continuous function $x(\lambda)$  defined for all sufficiently large $\lambda > \lambda_0$, satisfying $P(x(\lambda),\lambda)=0$. Then $\displaystyle \lim_{\lambda \to \infty} x(\lambda)$ exists (within the closed interval $[-\infty, \infty]$).

    Moreover, let us  write 
    \begin{equation}
        P(x,\lambda) = \lambda^m q(x) + r(x,\lambda),
    \end{equation}
    where $m$ is the highest degree of $\lambda$ in $P(x,\lambda)$, $q(x)$ is non-zero, and the degree of $\lambda$ in $r(x,\lambda)$ is less than $m$. Then, if the limit is finite, it is a root of $q(x)$.
\end{lemma}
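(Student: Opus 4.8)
The plan is to split the statement into two essentially independent parts: first, the existence of $\lim_{\lambda\to\infty} x(\lambda)$ in $[-\infty,\infty]$; and second, the identification of a \emph{finite} limit as a root of $q$. The second part is a one-line computation, so I would treat the existence of the limit as the heart of the argument. Throughout I only need two elementary facts: the intermediate value theorem for the continuous function $x(\lambda)$, and the fact that a single-variable polynomial with infinitely many roots is identically zero.

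For existence, I would argue by contradiction using the extended liminf and limsup. Set $L=\liminf_{\lambda\to\infty}x(\lambda)$ and $U=\limsup_{\lambda\to\infty}x(\lambda)$ in $[-\infty,\infty]$, and suppose $L<U$. Then the set of finite reals strictly between $L$ and $U$ is a nonempty open interval $I$. For each $c\in I$ we have $x(\lambda)<c$ for arbitrarily large $\lambda$ and $x(\lambda)>c$ for arbitrarily large $\lambda$; since $x$ is continuous, the intermediate value theorem produces a sequence $\lambda_k\to\infty$ with $x(\lambda_k)=c$, whence $P(c,\lambda_k)=0$ for infinitely many distinct $\lambda_k$. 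Because $\lambda\mapsto P(c,\lambda)$ is a one-variable polynomial with infinitely many roots, it vanishes identically in $\lambda$. Writing $P(x,\lambda)=\sum_j p_j(x)\lambda^j$, this forces $p_j(c)=0$ for every $j$ and every $c\in I$; as $I$ is infinite, each $p_j$ vanishes identically, so $P\equiv 0$, contradicting the hypothesis. Hence $L=U$ and the limit exists.

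For the second part, assume the limit $a=\lim_{\lambda\to\infty}x(\lambda)$ is finite. I would divide the identity $P(x(\lambda),\lambda)=0$ by $\lambda^m$ to obtain
\begin{equation}
    q(x(\lambda))+\frac{r(x(\lambda),\lambda)}{\lambda^m}=0.
\end{equation}
Writing $r(x,\lambda)=\sum_{j=0}^{m-1}r_j(x)\lambda^j$, each term $r_j(x(\lambda))\lambda^{\,j-m}$ tends to $0$, since $x(\lambda)\to a$ keeps $r_j(x(\lambda))$ bounded while $j-m<0$; so the fractional term vanishes in the limit. Continuity of $q$ gives $q(x(\lambda))\to q(a)$, and passing to the limit yields $q(a)=0$.

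The main obstacle is the existence part, specifically ruling out oscillation of $x(\lambda)$ as $\lambda\to\infty$. The decisive observation is that oscillation would force the algebraic constraint $P=0$ to hold along an interval-sized family of horizontal crossing values, which a nonzero polynomial cannot sustain. Beyond arranging these crossings into a genuine interval so that the coefficient polynomials $p_j$ are compelled to vanish identically, I expect no essential difficulty.
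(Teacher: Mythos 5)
Your proof is correct, and while it shares the paper's skeleton, its core mechanism is genuinely different. Both arguments suppose $\liminf_{\lambda\to\infty}x(\lambda)<\limsup_{\lambda\to\infty}x(\lambda)$ and use the intermediate value theorem to realize every finite $c$ in the gap along a sequence $\lambda_k\to\infty$. The paper then proceeds analytically: it divides $P(x(\lambda_k),\lambda_k)=0$ by $\lambda_k^m$, uses boundedness of $x(\lambda_k)$ to show $r(x(\lambda_k),\lambda_k)/\lambda_k^m\to 0$, and concludes $q(c)=0$ for every $c$ in the gap, contradicting that the nonzero polynomial $q$ has finitely many roots; this forces a separate treatment of the case $m=0$. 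You instead proceed algebraically: exact IVT crossings give $P(c,\lambda_k)=0$ for infinitely many distinct $\lambda_k$, so $P(c,\cdot)$ vanishes identically as a polynomial in $\lambda$, hence every coefficient $p_j(c)=0$; letting $c$ range over the interval forces each $p_j\equiv 0$ and thus $P\equiv 0$. Your route buys three things: no case split on $m$, no limit or boundedness bookkeeping in the key step, and an existence argument completely independent of the decomposition $P=\lambda^m q+r$ (which you only invoke afterward). The paper's route buys economy: its single division-by-$\lambda^m$ computation is simultaneously the engine of the contradiction and the identification of any finite limit as a root of $q$, whereas you prove that identification separately --- and your second part (divide by $\lambda^m$, bound $r_j(x(\lambda))$, pass to the limit) coincides with the paper's computation. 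One point worth making explicit in your write-up: the final contradiction ``$P\equiv 0$'' contradicts the implicit hypothesis that $P$ is not the zero polynomial (equivalently, that a decomposition with $q\neq 0$ exists); the paper relies on the same implicit assumption, since the lemma fails for $P\equiv 0$.
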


\begin{proof}[Proof of Lemma~\ref{lem:LimitExists}]
    To show that the limit exists it is enough to prove that
    \begin{equation}
        \liminf_{\lambda\to\infty} x(\lambda) = \limsup_{\lambda\to\infty} x(\lambda).
    \end{equation}
    Write $P(x,\lambda)$ as
    \begin{equation}
        P(x,\lambda) = \lambda^m q(x) + r(x,\lambda),
    \end{equation}
    where $m$ is the highest degree of $\lambda$ in $P(x,\lambda)$, $q(x)$ is non-zero, and the degree of $\lambda$ in $r(x,\lambda)$ is less than $m$. 
    
    If $m=0$, then $P(x,\lambda)=P(x)=q(x)$, which have only finitely many roots. Since $x(\lambda)$ is continuous and must admit a value which is a root of $P(x)=0$, it must be a constant function. Thus in this case the limit exists and it is a root of $q(x)$.
    
    We now assume that $m>0$. Then, 
    \begin{align}
        \lambda^m q(x(\lambda)) + r(x(\lambda),\lambda) &= 0,\\
        \Rightarrow q(x(\lambda)) + \dfrac{1}{\lambda^m} r(x(\lambda),\lambda) &= 0.
    \end{align}
    Suppose by contradiction that 
    $\liminf_{\lambda\to\infty}x(\lambda) = c_1$ is strictly less than \\
    $\limsup_{\lambda\to\infty}x(\lambda) = c_2$, with $c_1,c_2\in[-\infty,\infty]$. Then, for all $c \in (c_1,c_2)$, there exists a sequence $\lambda_k \to \infty$ such that
    \begin{equation}
        \lim_{k\to\infty} x(\lambda_k) = c
    \end{equation}
    by the Intermediate Value Theorem. 
    Then, in the limit $\dfrac{1}{\lambda^m} r(x(\lambda),\lambda) = 0$ because $x(\lambda)$ is bounded.
    But then, as $q(x)$ is a polynomial, it is continuous, so 
    \begin{equation}
        \lim_{k\to\infty} q(x(\lambda_k)) = q(c) = 0,
    \end{equation}
    which implies $c$ is a root of $q$ for all $c\in(c_1,c_2)$. Since the non-zero polynomial $q(x)$ can have only finitely many roots, it leads to a contradiction. Thus, the limit exists, and if it is finite, it must be a root of $q(x)$.
\end{proof}

\begin{remark}\label{rmk:root-of-q}
    Recall from the proof of Lemma~\ref{lem:LimitExists} that any finite limit of the dose-response curve of $X_j$ with respect to $X_i$ must be a root of the polynomial $q(x)$ introduced there with respect to $P(\overline{X}_j(\lambda),\lambda)$ as in the proof of Theorem~\ref{thm:main_theorem}. From this property we see the following:
    
    If the polynomial $q(x)$ for $X_j$ does not have any non-negative roots, then we must have
    \begin{equation}
        \lim_{\lambda\to\infty} \overline{X}_j(\lambda) =\infty.
    \end{equation}
    Similarly, if $q(x)$ for $X_j$ does not have $x=0$ as a root (which is often the case), then
    \begin{equation}
        0< \lim_{\lambda\to\infty} \overline{X}_j(\lambda) \leq \infty.
    \end{equation}
    Also note that if $X_j$ appears in the support of a positive conservation law, then it is guaranteed to have a finite limit:
    \begin{equation}
        0\leq\lim_{\lambda\to\infty}\overline{X}_j(\lambda)<\infty.
    \end{equation}
    Furthermore, if in this case $P(\overline{X}_j(\lambda),\lambda)$ does not depend on $\lambda$, then the dose-response curve of $X_j$ with respect to $X_i$ is eventually constant. In other words, we have not only the following limit:
    \begin{equation}
        \lim_{\lambda \to \infty} \overline{X}_j(\lambda) = c \in (0,\infty),
    \end{equation}
    but also that $\overline{X}_j(\lambda) = c$ for \emph{all} sufficiently large $\lambda>0$.
    
    These observations together are particularly useful when examining whether a given species $X_j$ exhibits aACR with respect to $X_i$, as the following theorem summarizes.
\end{remark}

\begin{theorem} \label{thm:second_theorem}
    Consider a reaction system given by \eqref{eq:CRN} that has well defined dose-response curves for input $X_i$. Assume that this system  admits a positive conservation law with support set $\Sigma = \{X_{j_1},\dots,X_{j_k}\}$, such that $X_i$ is not in $\Sigma$. Assume also that $X_j$ is in $\Sigma$, and that the polynomial $q(x)$ for $X_j$ does not have a root at zero.
    Then $X_j$ has aACR with respect to $X_i$.
\end{theorem}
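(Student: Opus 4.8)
The plan is to obtain the conclusion by assembling the three quantitative observations already recorded in Remark~\ref{rmk:root-of-q}, since the theorem is essentially the conjunction of a finiteness bound and a positivity bound on the limit of the dose-response curve $\overline{X}_j(\lambda)$. Concretely, I would first invoke the construction from the proof of Theorem~\ref{thm:main_theorem}: after using the steady-state equations of \eqref{eq:CRN} together with the conservation laws \eqref{eq:cons-law} and eliminating all variables other than $X_j$, one arrives at a single polynomial relation $P(\overline{X}_j(\lambda),\lambda)=0$ whose leading-in-$\lambda$ coefficient is the polynomial $q(x)$. Lemma~\ref{lem:LimitExists} then guarantees that $\lim_{\lambda\to\infty}\overline{X}_j(\lambda)$ exists in $[-\infty,\infty]$, and that whenever this limit is finite it must be a root of $q(x)$.

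The two remaining steps are to rule out $+\infty$ and to rule out $0$. For finiteness, I would use that $X_j$ lies in the support $\Sigma$ of the assumed positive conservation law while $X_i \notin \Sigma$: adding $\lambda\mathbf{e}_i$ to the initial condition leaves the value of this conservation law unchanged, so the nonnegative quantity $\alpha_j\overline{X}_j(\lambda)$ is bounded above by the fixed conserved total $\sum_{X_\ell\in\Sigma}\alpha_\ell X_\ell(0)$, uniformly in $\lambda$. Hence $0\le \lim_{\lambda\to\infty}\overline{X}_j(\lambda)<\infty$. For positivity, I would combine this finiteness with the ``moreover'' clause of Lemma~\ref{lem:LimitExists}: the finite limit is a root of $q(x)$, and by hypothesis $q(x)$ has no root at $x=0$, so the limit cannot equal $0$. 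Together these give $\lim_{\lambda\to\infty}\overline{X}_j(\lambda)\in(0,\infty)$, which is exactly the convergence to a positive real number required by Definition~\ref{def:aACR}.

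The step I expect to require the most care is confirming the \emph{uniformity} demanded by Definition~\ref{def:aACR}, namely that the positive limit value $a_{i,j}$ is the same for every base point $\mathbf{x}_0\in\mathbb{R}_{>0}^n$. The key point is that the leading coefficient $q(x)$ is independent of $\mathbf{x}_0$: the initial data enter the elimination polynomial $P(x,\lambda)$ only through the conserved totals, and those that do not involve $X_i$ contribute $\lambda$-independent constants while the one involving $X_i$ contributes a term linear in $\lambda$, so the finite offsets coming from $\mathbf{x}_0$ appear only in the lower-order-in-$\lambda$ remainder $r(x,\lambda)$ and not in $\lambda^m q(x)$. Consequently the finite set of admissible limit values, namely the roots of $q(x)$, does not move with $\mathbf{x}_0$, and the finiteness and positivity bounds above single out the same root. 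I would close by noting that this is precisely the mechanism illustrated in the archetypal example, where $\overline{X}(\lambda)\to\beta/\alpha$ regardless of the fixed coordinates of the initial condition.
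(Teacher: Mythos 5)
Your core argument is exactly the paper's own route: the paper proves Theorem~\ref{thm:second_theorem} by combining the machinery from the proof of Theorem~\ref{thm:main_theorem} (the algebraic relation $P(\overline{X}_j(\lambda),\lambda)=0$ and existence of the limit via Lemma~\ref{lem:LimitExists}) with the observations collected in Remark~\ref{rmk:root-of-q}: membership of $X_j$ in the support of a positive conservation law not containing $X_i$ forces the limit to be finite, any finite limit must be a root of $q(x)$, and $q(0)\neq 0$ together with nonnegativity of $\overline{X}_j(\lambda)$ forces the limit to be strictly positive. Your first two paragraphs reproduce this faithfully and are correct.

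Your final paragraph, however---the uniformity step you yourself flag as the delicate one---contains a genuine error. It is not true that $\mathbf{x}_0$ enters $P(x,\lambda)$ only through the lower-order remainder $r(x,\lambda)$: the conserved totals that do not involve $X_i$ are indeed $\lambda$-independent constants, but in the elimination they get multiplied by the total that \emph{does} involve $X_i$ (which has the form $C+\lambda$), so they survive into the leading coefficient $q(x)$. The paper's own worked example shows this explicitly: for the EnvZ-OmpR network with $X_i \in \{X, XT, X_\text{P}\}$ one obtains $q(x) = -(T_2 - c - dx)$, whose unique root $(T_2-c)/d$ depends on the conserved total $T_2$, i.e., on $\mathbf{x}_0$. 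So the limit value genuinely moves with the base point, and no argument can produce a base-point-independent constant $a_{i,j}$ as a literal reading of Definition~\ref{def:aACR} would demand; the theorem is true only under the reading (consistent with how the paper applies it in Example~\ref{ex:analysis-biochem-unmod} and Tables~\ref{tab:SF-2B-unmodified} and~\ref{tab:SF-2B-modified-1}) that for every $\mathbf{x}_0$ the limit exists, is finite, and is positive, with the value allowed to depend on $\mathbf{x}_0$ through the unvaried conserved quantities. Under that reading your first two paragraphs already constitute a complete proof, and the third paragraph should simply be dropped rather than repaired.
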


The proof of Theorem~\ref{thm:second_theorem} is in \cref{appendix:proofs}. 

\begin{remark}
    Note that the polynomial $P(x,\lambda)$ for $X_j$ can be found using Elimination Theory from computational algebraic geometry. In particular, we can use Gr\"{o}bner basis or resultants to get an explicit expression for the polynomial. This calculation can be simplified in the case of the original network having intermediates, as we can reduce the Gr\"{o}bner basis calculation to a calulation for the reduced network with no intermediates \cite{Sadeghimanesh2019}. 
\end{remark} 

\begin{remark}
   In the case where we have an explicit (rational) parametrization of the steady-state variety in terms of a species $X_j$ and other species we can greatly simplify the calculations for the polynomial $P(x,\lambda)$. This is quite a common situation and there exist many tools available to check for such a parametrization, for example, in the case of toric steady-state varieties, also called quasi-thermostatic \cite{Craciun2009,PerezMillan2011}, or for translated networks in the generalized mass-action setting \cite{johnston2014translated,Johnston2018}. 
 If such a parametrization exists, we can use it to reduce the system with $n$ polynomials, $n - s$ linear equations (conservation laws) and $2n - s$ variables ($n$ species and $n-s$ conserved quantities), to a system with just $n-s$ polynomial equations and just $2n-2s$ variables ($n-s$ species and $n-s$ conserved quantities). 
 From this reduced system, we can calculate via Gr\"{o}bner basis or other methods a polynomial in terms of $X_j$ and all conserved quantities, which can be used to perform all necessary calculations. We illustrate this principle in \cref{sec:application-characterization} for the reaction network in Fig.~\ref{fig:biochem-example}a. Note that in practice the number of conservation laws, $n-s$, is small. In Example~\ref{ex:analysis-biochem-unmod}, we have $n=7$ and $n-s=2$, so we reduce from a system of $9$ equations in $7$ variables, to a system of $2$ equations in $2$ variables.
\end{remark}

\subsection{Application of Theorem~\ref{thm:main_theorem}}
Theorem~\ref{thm:main_theorem} guarantees the existence of aACR under the presence of positive conservation laws, without the need of a detailed analysis. 
In this subsection, we demonstrate that how Theorem~\ref{thm:main_theorem} is applied to \textit{E. coli} EnvZ-OmpR system (Fig.~\ref{fig:biochem-example}). 

\begin{example}
Consider the reaction network modeling the \textit{E. coli} EnvZ-OmpR system from Fig.~\ref{fig:biochem-example}a in our main text. This system has two conserved quantities:
\begin{itemize}
    \item The total mass of $X$-related species: $[X] + [XT] + [X_\text{P}] + [X_\text{P}Y] + [XTY_\text{P}]= T_1.$
    \item The total mass of $Y$-related species: $[Y] + [Y_\text{P}] + [X_\text{P}Y] + [XTY_\text{P}]=T_2.$
\end{itemize}
Note that these give two positive conservation laws with supports \\
$\Sigma_{T_1}\coloneqq \{X, XT, X_\text{P}, X_\text{P}Y, XTY_\text{P}\}$ and $\Sigma_{T_2}\coloneqq \{Y, Y_\text{P}, X_\text{P}Y, XTY_\text{P}\}$, respectively. By Theorem~\ref{thm:main_theorem} we see that:
\begin{itemize}
    \item For each $X_i\in\{Y, Y_\text{P}\}$, there is at least one \\
    $X_j\in \Sigma_{T_1} = \{X, XT, X_\text{P}, X_\text{P}Y, XTY_\text{P}\}$ that admits aACR with respect to $X_i$.
    \item For each $X_i\in\{X, XT, X_\text{P}\}$, there is at least one \\
    $X_j\in \Sigma_{T_2} = \{Y, Y_\text{P}, X_\text{P}Y, XTY_\text{P}\}$ that admits aACR with respect to $X_i$.
\end{itemize}
Note also that the conservation laws do not change under modifications that irreversible reactions are changed to reversible ones. 
This explains why aACR are mostly preserved even when we modify the network by changing irreversible reactions to reversible ones (Tables~\ref{tab:SF-2B-unmodified} and \ref{tab:SF-2B-modified-1}).
\end{example}

\subsection{Lemma~\ref{lem:LimitExists} and Theorem~\ref{thm:second_theorem} allow complete characterization of steady-state behaviors}\label{sec:application-characterization}

We can use Lemma~\ref{lem:LimitExists} and Theorem~\ref{thm:second_theorem} to provide a \emph{complete characterization} of steady-state behaviors. Even more, in the following examples whenever the limit behavior of a dose-response curve has aACR, \emph{we explicitly calculate its limit}.

\begin{example}\label{ex:analysis-biochem-unmod}
The system of ODEs corresponding to the reaction network in Fig.~\ref{fig:biochem-example}a is given by 
\begin{align*}
    \dfrac{d[X]}{dt} &= -k_1[X] + k_{-1} [XT] + k_3 [X_\text{P}Y], \\ 
    \dfrac{d[XT]}{dt} &= k_1[X] - (k_{-1}+k_{-1})[XT] - k_{-3}[XT][Y_\text{P}] + (k_{-5}+k_4)[XTY_\text{P}], \\
    \dfrac{d[X_\text{P}]}{dt} &= k_{-1}[XT] - k_2[X_\text{P}][Y] + k_{-3}[X_\text{P}Y], \\ 
    \dfrac{d[Y]}{dt} &= k_4[XTY_\text{P}] - k_2[X_\text{P}][Y] + k_{-3}[X_\text{P}Y], \\
    \dfrac{d[Y_\text{P}]}{dt} &= k_3[X_\text{P}Y] - k_{-3}[XT][Y_\text{P}] + k_{-5}[XTY_\text{P}],\\ 
    \dfrac{d[X_\text{P}Y]}{dt} &= k_2[X_\text{P}][Y] - (k_{-3}+k_3)[X_\text{P}Y], \\ 
    \dfrac{d[XTY_\text{P}]}{dt} &= k_{-3}[XT][Y_\text{P}] - (k_{-5}+k_4)[XTY_\text{P}].
\end{align*}




The steady-state variety formula in terms of $[\overline{X_\text{P}Y}]$ and $[\overline{Y}]$ is given by: 
\begin{align}
    [\overline{X}] &= \frac{(k_{-1}+k_2)k_4[\overline{X_\text{P}Y}]}{k_1k_2} \label{eq:biochemex1-ss-for-X}\\ 
    [\overline{XT}] &= \frac{k_4[\overline{X_\text{P}Y}]}{k_2} \\ 
    [\overline{Y_\text{P}}] &= \frac{k_2(k_{-5}+k_6)}{k_5k_6} \\ 
    [\overline{X_\text{P}}] &= \frac{(k_{-3}+k_4)[\overline{X_\text{P}Y}]}{k_3[\overline{Y}]} \\ 
    [\overline{XTY_\text{P}}] &= \frac{k_4[\overline{X_\text{P}Y}]}{k_6}.\label{eq:biochemex1-ss-for-XTY_P}
\end{align}

\textbf{Note that $[\overline{Y_\text{P}}]$ is independent to $[\overline{XT}]$ and $[\overline{X_\text{P}}]$, which indicates the independence to initial conditions.}
While this formula is the function of $[\overline{X_\text{P}Y}]$ and $[\overline{Y}]$ in addition to rate constants, we can obtain the formula in terms of $T_1$ and $T_2$, which is more natural. Specifically, from the two conserved quantities,
\begin{align*}
    T_1 & = [\overline{X}] + [\overline{XT}] + [\overline{X_\text{P}}] + [\overline{X_\text{P}Y}] + [\overline{XTY_\text{P}}]\nonumber \\
    & = \frac{(k_{-1}+k_2)k_4[\overline{X_\text{P}Y}]}{k_1k_2} + \frac{k_4[\overline{X_\text{P}Y}]}{k_2} + \frac{(k_{-3}+k_4)[\overline{X_\text{P}Y}]}{k_3[\overline{Y}]} + [\overline{X_\text{P}Y}] + \frac{k_4[\overline{X_\text{P}Y}]}{k_6}\nonumber\\
    & = \left\{ \frac{(k_{-1}+k_2)k_4}{k_1k_2} + \frac{k_4}{k_2} + 1 + \frac{k_4}{k_6}\right\}[\overline{X_\text{P}Y}] + \frac{(k_{-3}+k_4)}{k_3}\frac{[\overline{X_\text{P}Y}]}{[\overline{Y}]},\\ 
    T_2 & = [\overline{Y}] + [\overline{Y_\text{P}}] + [\overline{X_\text{P}Y}] + [\overline{XTY_\text{P}}]\nonumber \\
     & = [\overline{Y}] + \frac{k_2(k_{-5}+k_6)}{k_5k_6} + [\overline{X_\text{P}Y}] + \frac{k_4[\overline{X_\text{P}Y}]}{k_6}\nonumber\\
     & = \frac{k_2(k_{-5}+k_6)}{k_5k_6} + [\overline{Y}] + \left\{1 + \frac{k_4}{k_6}\right\}[\overline{X_\text{P}Y}].
\end{align*}

For simplicity of calculations, we rewrite the conservation expressions as
\begin{align}
    T_1 & = a [\overline{X_\text{P}Y}] + b\frac{[\overline{X_\text{P}Y}]}{[\overline{Y}]},\label{eq:biochem-T_1}\\ 
    T_2 & = c + d [\overline{X_\text{P}Y}] + [\overline{Y}],\label{eq:biochem-T_2}
\end{align}
where $a,b,c,d$ are constants in terms of $k_i$ and in particular $c$ is the ACR value for $Y_\text{P}$.

From this, we can reduce into a single polynomial equation in terms of $[\overline{X_\text{P}Y}]$ and $T_1,T_2$ to then calculate the limit when $X_\text{P}Y$ is chosen to be $X_j$. Indeed, from the second expression Eq.~\eqref{eq:biochem-T_2},
\begin{align}\label{eq:biochemex2}
    [\overline{Y}] = T_2 - c - d[\overline{X_\text{P}Y}].
\end{align}
After plugging it in the first expression ~\eqref{eq:biochem-T_1} and clearing denominators we get
\begin{align}
    a[\overline{X_\text{P}Y}](T_2 - c- d[\overline{X_\text{P}Y}]) - T_1(T_2 - c- d[\overline{X_\text{P}Y}]) + b[\overline{X_\text{P}Y}] = 0.
\end{align}
Therefore, if $X_i \in \{X,XT,X_{\text{P}}\}$, then $T_1 = C_1 + \lambda$, and we get a function $P(x, \lambda)$ for $X_\text{P}Y$ evaluated at $x=[\overline{X_\text{P}Y}](\lambda)$: 
\begin{align}
    P([\overline{X_\text{P}Y}](\lambda),\lambda) &= - \lambda(T_2 - c- d[\overline{X_\text{P}Y}](\lambda)) + a[\overline{X_\text{P}Y}](\lambda)(T_2 - c- d[\overline{X_\text{P}Y}](\lambda)) \notag
    \\
    & - C_1(T_2 - c- d[\overline{X_\text{P}Y}](\lambda)) + b[\overline{X_\text{P}Y}](\lambda).
\end{align}
Thus, 
\begin{align}
    q([\overline{X_\text{P}Y}](\lambda)) = -(T_2 - c- d[\overline{X_\text{P}Y}](\lambda)).
\end{align}
Applying Lemma~\ref{lem:LimitExists} and by Remark~\ref{rmk:root-of-q}, we see that the limit of $[\overline{X_\text{P}Y}](\lambda)$ must be a root of $q$, so
\begin{equation}
\lim_{\lambda\to\infty} [\overline{X_\text{P}Y}](\lambda)=\dfrac{T_2 - c}{d}.
\end{equation}
From \eqref{eq:biochemex2}, we see that $Y$ goes to 0. From \eqref{eq:biochemex1-ss-for-X}--\eqref{eq:biochemex1-ss-for-XTY_P}, it follows that $X, XT,$ and $ XTY_\text{P}$ have aACR, and $X_\text{P}$ diverges with respect to $X_i \in \{X,XT,X_{\text{P}}\}$.

Similarly, if $X_i \in \{Y,Y_{\text{P}}\}$,
\begin{align}
    q([\overline{X_\text{P}Y}](\lambda)) = a[\overline{X_\text{P}Y}](\lambda)-T_1.
\end{align}
Thus, 
\begin{equation}
\lim_{\lambda\to\infty} [\overline{X_\text{P}Y}](\lambda)=\dfrac{T_1 }{a}.
\end{equation}
By \eqref{eq:biochemex2}, we see that $Y$ diverges. From \eqref{eq:biochemex1-ss-for-X}--\eqref{eq:biochemex1-ss-for-XTY_P}, $X, XT, XTY_\text{P}$ also have aACR, and $X_\text{P}$ goes to 0 with respect to $X_i \in \{Y,Y_{\text{P}}\}$.

Finally, if $X_i \in \{X_\text{P}Y,XTY_{\text{P}}\}$, then $T_1 = C_1 + \lambda, T_2 = C_2 + \lambda$, and we get a function $P(x, \lambda)$ for $X_\text{P}Y$ evaluated at $x=\overline{X_\text{P}Y}(\lambda)$: 
\begin{align}
    P([\overline{X_\text{P}Y}](\lambda),\lambda) &= -\lambda^2 + \lambda(a[\overline{X_\text{P}Y}] + c + d[\overline{X_\text{P}Y}])\notag \\
    &-a[\overline{X_\text{P}Y}](T_2 - c- d[\overline{X_\text{P}Y}]) - T_1(T_2 - c- d[\overline{X_\text{P}Y}]) + b[\overline{X_\text{P}Y}].
\end{align}
Thus,
\begin{equation}
    q([\overline{X_\text{P}Y}](\lambda))=-1.
\end{equation}
Therefore, as $q$ does not have any positive roots, 
\begin{equation}
\lim_{\lambda\to\infty} [\overline{X_\text{P}Y}](\lambda)=\infty, 
\end{equation}
by Remark~\ref{rmk:root-of-q}. From \eqref{eq:biochemex1-ss-for-X}--\eqref{eq:biochemex1-ss-for-XTY_P} we still see that $X, XT, XTY_\text{P}$ also diverge, but we do not gain information about the behavior of $Y$ or $X_\text{P}$ with respect to $X_i \in \{X_\text{P}Y,XTY_{\text{P}}\}$. Instead, we need to recalculate the polynomial for a different choice of $X_j$.
%

Let $X_j=Y$. Then from the second conservation law \eqref{eq:biochem-T_2}, we see that
\begin{align}
    [\overline{X_\text{P}Y}] = \frac{1}{d}(T_2 - c - [\overline{Y}]),
\end{align}
and after substituting it in \eqref{eq:biochem-T_1} and clearing denominators we get
\begin{align}
    \frac{a}{d}(T_2 - c - [\overline{Y}])[\overline{Y}] + \frac{b}{d}(T_2 - c - [\overline{Y}]) - T_1[\overline{Y}] = 0.
\end{align}
Therefore, if $X_i \in \{X_\text{P}Y,XTY_{\text{P}}\}$, $T_1 = C_1 + \lambda$, $T_2 = C_2 + \lambda$, and we get a function $P(x, \lambda)$ for $Y$ evaluated at $x=\overline{Y}(\lambda)$: 
\begin{align}
    P([\overline{Y}](\lambda),\lambda) &= \lambda(\frac{a}{d}[\overline{Y}](\lambda)+\frac{b}{d}-[\overline{Y}](\lambda))\\
    &+\frac{a}{d}(C_2 - c - [\overline{Y}](\lambda))[\overline{Y}](\lambda) + \frac{b}{d}(C_2 - c - [\overline{Y}](\lambda)) - C_1[\overline{Y}](\lambda).
\end{align}
Thus, 
\begin{align}
    q([\overline{Y}]) = (\frac{a}{d}-1)[\overline{Y}]+\frac{b}{d}.
\end{align}
Note that $a>d$, so $q$ does not have any positive roots. Therefore, by Remark~\ref{rmk:root-of-q}, 
\begin{equation}
\lim_{\lambda\to\infty} [\overline{Y}](\lambda)=\infty.
\end{equation}
From this, now we can obtain the limit for $X_j=X_\text{P}$ when $X_i \in \{X_\text{P}Y,XTY_{\text{P}}\}$. Indeed, consider the conservation law given by taking the first conservation law and subtracting the second one. Then we have
\begin{align}
    [\overline{X}] + [\overline{XT}] + [\overline{X_\text{P}}] - [\overline{Y}] - [\overline{Y_\text{P}}] = T_1 - T_2.
\end{align}
Note that this conservation law is not affected by changing the initial concentration of $X_i \in \{X_\text{P}Y,XTY_{\text{P}}\}$, so it will stay fixed as $\lambda\to\infty$. Now, using the steady-state formulas \eqref{eq:biochemex1-ss-for-X}--\eqref{eq:biochemex1-ss-for-XTY_P}, we see that
\begin{align}
    T_1 - T_2 &= [\overline{X}] + [\overline{XT}] + [\overline{X_\text{P}}] - [\overline{Y}] - [\overline{Y_\text{P}}]\notag\\
    &= \frac{(k_{-1}+k_2)k_4}{k_1k_2}[\overline{X_\text{P}Y}] + \frac{k_4}{k_2}[\overline{X_\text{P}Y}] + [\overline{X_\text{P}}] - [\overline{Y}] - \frac{k_2(k_{-5}+k_6)}{k_5k_6}\notag\\
    &= (a-d)[\overline{X_\text{P}Y}] + [\overline{X_\text{P}}] - [\overline{Y}] - c\\
    &= \frac{a-d}{b}[\overline{X_\text{P}}][\overline{Y}] + [\overline{X_\text{P}}] - [\overline{Y}] - c,\notag
\end{align}
where $a,b$ and $d$ are constants depending of $k_i$ as defined before. Then solving for $[\overline{X_\text{P}}]$, we obtain
\begin{align}
    [\overline{X_\text{P}}] = \dfrac{[\overline{Y}] + T_1 - T_2 + c}{\dfrac{a-d}{b}[\overline{Y}] + 1}.
\end{align}
Note that as $\lambda\to\infty$, if $T_1 = C_1+\lambda, T_2 = C_2 + \lambda$, then $T_1 - T_2 = C_1 - C_2$ fixed, and as the limit of $[\overline{Y}](\lambda)$ is $\infty$, we see that
\begin{align}
    \lim_{\lambda\to\infty} [\overline{X_\text{P}}](\lambda) = \dfrac{b}{a-d},
\end{align}
showing that $X_\text{P}$ has aACR when $X_i \in \{X_\text{P}Y,XTY_{\text{P}}\}$.

Putting all these together, we obtain the complete characterization summarized in Table~\ref{tab:SF-2B-unmodified}. See Supplementary Materials for the same analysis for Table~\ref{tab:SF-2B-modified-1}. 
\end{example}


\color{blue}

\color{black}

\section{Discussion}
In this work, we introduced the previously unacknowledged notion of aACR. We showed that approximate concentration robustness is not  merely the outcome of ``exact'' ACR obscured by experimental noise or hidden reactions with negligibly small rate constants. We demonstrated that such approximate robustness can emerge directly from network structure itself. Specifically, whenever a network exhibits aACR for a given species, this approximate invariance arises structurally, without requiring the strong assumption of rate constants being negligible. 
Moreover, we found that aACR is strikingly widespread across biologically relevant systems, including the \textit{E. coli} EnvZ-OmpR osmoregulation pathway and the phosphorylation-dephosphorylation futile cycle
(Fig.~\ref{fig:biochem-example} and Fig.\ref{Supp-fig:additional-networks}; Tables~\ref{tab:SF-2B-unmodified} and \ref{tab:SF-2B-modified-1}; and Tables~\ref{Supp-tab:FutCyc-unmodified} and \ref{Supp-tab:FutCyc-modified-3}). 
We showed that this prevalence is not coincidental by proving the theorems ensuring that aACR arises as a necessary consequence of structural features in networks with conserved quantities (Theorems~\ref{thm:main_theorem} and \ref{thm:second_theorem}). Furthermore, we provide the general approach to completely characterize steady-state responses under perturbation of initial conditions based on computational algebra, as described in \cref{ex:analysis-biochem-unmod}.
%

While ACR has traditionally been the focus of efforts to identify \textit{exact} robustness in reaction networks, our findings suggest that this perspective may be overly restrictive. If one seeks only networks exhibiting exact ACR, many biologically relevant systems that display robust behavior in practice will be overlooked. Networks with aACR can maintain stable concentrations of key species over a wide range of conditions, even if they do not meet the formal criteria for ACR. This highlights the need to broaden our search criteria: by accounting for approximate behaviors that arise intrinsically from network architecture---rather than dismissing them as deviations or noise---we can uncover a wider and more realistic set of candidate networks for understanding or engineering robust biological functions.

Furthermore, our results challenge the common assumption that approximate concentration robustness necessarily arises from a ``core'' ACR subnetwork embedded within a larger system~\cite{ShinarFeinberg2010Science}. Here, we showed that aACR, which is an instance of approximate robustness, can exist even when no identifiable ACR core is present. For example, in Tables~\ref{tab:SF-2B-unmodified} and \ref{tab:SF-2B-modified-1}, aACR frequently appears in species that are irrelevant to any known core ACR structure. Similarly, in the phosphorylation-dephosphorylation futile cycle (Fig.~\ref{Supp-fig:additional-networks}), no instances of ACR are observed, while many species exhibit aACR (Tables~\ref{Supp-tab:FutCyc-unmodified} and \ref{Supp-tab:FutCyc-modified-3}).
That is, the behavior cannot be attributed to a subsystem with exact ACR that is merely perturbed by additional reactions or noise. Instead, the robustness emerges from the interplay of reactions across the full network, often involving mechanisms that fall outside the scope of traditional ACR theory. This observation suggests that aACR represents a genuinely distinct form of robustness, so this deserves separate theoretical treatment and recognition in both modeling and experimental analysis.


The concept of aACR also opens promising avenues for synthetic biology, particularly for the design of biochemical circuits used in computing and control. Recent works by Khammash and colleagues \cite{aoki2019universal, Lillacci-synthetic2018, Anastassov2023}, as well as by Anderson and Joshi \cite{AndersonJoshi2025}, have explored biochemical systems as foundational elements for molecular controllers and logic gates. For such systems to function reliably, intermediate modules must produce consistent outputs, even when connected to downstream components or embedded within larger networks. 
Our results suggest that aACR is especially well-suited for these contexts: the output species in an aACR module remains stable substantial variation in initial conditions. This insensitivity to upstream or downstream perturbations enables robust signal transmission across modular layers, making aACR an attractive structural design principle for biochemical computing. In this way, aACR may provide the robustness backbone needed to build scalable, fault-tolerant synthetic networks.




\appendix

\section*{Supplementary Materials}
This article has accompanying supplementary material PDF file. The file contains supplementary texts for mathematical details and additional examples, supplementary figure, and supplementary tables.

\section*{Acknowledgments}
We thank Bal\'{a}zs Boros for valuable comments and discussions. We also thank Gautam Neelakantan Memana for his valuable help and discussions.

\section{Appendix}

\subsection{Proof of Theorems~\ref{thm:main_theorem} and~\ref{thm:second_theorem}}\label{appendix:proofs}
\begin{proof}[Proof of Theorem~\ref{thm:main_theorem}]
Recall that we say $X_j$ has aACR with respect to $X_i$ if the steady-state value of $X_j$, denoted by $\overline{X}_j(\lambda)$, has a finite and positive limit as $\lambda \to \infty$ (Definition~\ref{def:aACR}). Here, $\lambda$ represents the initial concentration of $X_i$, which we treat as an ``input'' value, and the ``output'' value is the corresponding steady-state value of $X_j$.

For sufficiently large $\lambda$, let $\overline{X}(\lambda) = (\overline{X}_1(\lambda), \dots, \overline{X}_n(\lambda)) \in \mathbb{R}^n_{\geq 0}$ denote the unique positive equilibrium in the stoichiometric compatibility class of $\mathbf{x}_0 + \lambda \mathbf{e}_i$. With this notation, $\overline{X}_j(\lambda)$ represents the dose–response curve of $X_j$ with respect to $X_i$.

We first want to show that the function $\overline{X}_j(\lambda)$ satisfies the properties that are enjoyed by of the function $x(\lambda)$ from the statement of Lemma~\ref{lem:LimitExists}, i.e., it is a continuous algebraic function for a sufficiently large $\lambda$. Note that $\overline{X}(\lambda) = (\overline{X}_1(\lambda), \dots, \overline{X}_n(\lambda))$ is a solution of the system of polynomial (steady-state) equations and conservation relations given by
\begin{align}\label{eq:PolynomialSys}
    f_1(\overline{X}_1(\lambda), \dots, \overline{X}_n(\lambda))&=0\nonumber\\
    \vdots\nonumber\\
    f_n(\overline{X}_1(\lambda), \dots, \overline{X}_n(\lambda))&=0\\
    \sum_{k=1}^n \alpha_k^{(1)} \overline{X}_k(\lambda) &= C^{(1)} + \alpha_i^{(1)}\lambda\nonumber\\
    \vdots\nonumber\\
    \sum_{k=1}^n \alpha_k^{(m)} \overline{X}_k(\lambda) &= C^{(m)} + \alpha_i^{(m)}\lambda\nonumber
\end{align}
where we can assume $f_i:\mathbb{R}^n \to \mathbb{R}$ are polynomials without loss of generality.

Since the function $\overline{X}(\lambda)$ is a semi-algebraic function (i.e., solution of a polynomial system with some sign conditions), and $\overline{X}_j(\lambda)$ is it obtained from it by projection, it follows that $\overline{X}_j(\lambda)$ is a semi-algebraic function by the Tarski–Seidenberg theorem~\cite{bochnak_coste_roy_Real_Alg_Geom}. Then, the graph of $\overline{X}_j(\lambda)$ is a semi-algebraic set and can be written as a finite union of points and curves that admit parameterizations via diffeomorphisms to the interval $(0,1)$ (see for example Section 3 and especially Corollary 3.8 in~\cite{coste_Intr_Semialg_Geometry}). In particular, since the number of such curves is finite, it follows that for $\lambda$ large enough the graph of $\overline{X}_j(\lambda)$ consists of a single such curve. If we denote this curve by $\gamma(s) = (g(s),h(s))$, then we have 
$$
\overline{X}_j(g(s)) = h(s),
$$
for all $s \in (0,1)$. Since $\gamma:(0,1)\to\mathbb{R}^2$ is a diffeomorphism and the graph of $\overline{X}_j(\lambda)$ satisfies the vertical line test, it follows that $g(s)$ must be one-to-one and thus a homeomorphism. Then we obtain 
$$
\overline{X}_j(\lambda) = h(g^{-1}(\lambda)),
$$
and therefore $\overline{X}_j(\lambda)$ is continuous for a sufficiently large $\lambda$. 
The fact that $\overline{X}_j(\lambda)$ is an algebraic function (i.e., there exists a nonzero polynomial $P(x,\lambda)$ such that $P(\overline{X}_j(\lambda),\lambda)=0$) follows similarly from properties of semi-algebraic functions. A short but detailed explanation appears for example in the proof of Proposition 2.11 in~\cite{coste_Intr_Semialg_Geometry}.

We can conclude that the function $\overline{X}_j(\lambda)$ satisfies the hypotheses of Lemma~\ref{lem:LimitExists}, and therefore the limit $$
\displaystyle \lim_{\lambda \to \infty} \overline{X}_j(\lambda)
$$ 
exists; moreover, since the species $X_j$ is contained in the support of a positive conservation law that does not change as $\lambda$ changes, it follows that this limit is finite. 

If we assume that this limit is zero for all $\overline{X}_j \in \Sigma$, then we arrive at a contradiction: it implies that every $X_j$ is identically zero for all time, which is impossible due to the conservation law. Therefore, there must exist at least one species $\overline{X}_j \in \Sigma$ for which the limit $\displaystyle\lim_{\lambda \to \infty} \overline{X}_j(\lambda)$ is finite and positive. This, in turn, implies that there exists at least one $X_j$ that has aACR with respect to $X_i$.
\end{proof}

\subsection{Model-specific derivations and analyses for the examples in Fig.~\ref{fig:archetypal}}\label{subsec:model-derivation}
Consider first the reaction network in~\eqref{crn:archetypal} (Fig.~\ref{fig:archetypal}a). As we derived in \eqref{eq:archetypal-SS-1}, 
the steady-state variety is given by
\begin{equation}
    - \alpha X^{\text{SS}}Y^{\text{SS}} + \beta Y^{\text{SS}} =(- \alpha X^{\text{SS}} + \beta) Y^{\text{SS}} = 0.
\end{equation}
Thus, either $Y^{\text{SS}} = 0$ or $- \alpha X^{\text{SS}} + \beta = 0$. Using that $X^{\text{SS}} + Y^{\text{SS}} = T$ we get
\begin{equation}
    (X^{\text{SS}}, Y^{\text{SS}}) = \begin{cases}
        \left(\dfrac{\beta}{\alpha}, T-\dfrac{\beta}{\alpha}\right) & \text{if}\quad  T \geq \beta/\alpha , \\ 
        \left(T,0\right) & \text{otherwise.}
    \end{cases}
\end{equation}
Now, let us focus on the modified reaction network shown in Fig.~\ref{fig:archetypal}d. The corresponding dynamical system can be described by:
\begin{equation}\label{eq:archetypal-AsymACR}
    \begin{aligned}
        \dfrac{dX}{dt} & = - \alpha XY + \beta Y - \gamma X,\\
        \dfrac{dY}{dt} & =  \alpha XY - \beta Y + \gamma X .\\
    \end{aligned}
\end{equation}

It is still true that we have $T = X(t) + Y(t)$ constant. The steady-state variety now is given by
\begin{equation}
    - \alpha X^{\text{SS}}Y^{\text{SS}} + \beta Y^{\text{SS}} - \gamma X^{\text{SS}} = 0.
\end{equation}
Using $T = X^{\text{SS}} + Y^{\text{SS}}$, we can substitute and calculate the steady states in terms of $T$:
\begin{align}
    X^{\text{SS}} &= \dfrac{1}{2\alpha} (\beta + \gamma + \alpha T - \sqrt{\Delta}),\\
    Y^{\text{SS}} &= \dfrac{1}{2\alpha} (-\beta - \gamma + \alpha T + \sqrt{\Delta}),
\end{align}
where
\begin{align*}
    \Delta &= \beta^2 + \gamma^2 +\alpha^2T^2 + 2\beta\gamma + 2\alpha\gamma T - 2\alpha \beta T = (\alpha T - \beta + \gamma)^2 + 4\beta\gamma.
\end{align*}
Note that this solution is unique with the restriction that both $A, B \geq 0$. We then see that as $T\to \infty$, $\sqrt{\Delta} \to \alpha T - \beta + \gamma$ and so
\begin{equation}
    X^{\text{SS}} \approx \dfrac{1}{2\alpha} (\beta + \gamma + \alpha T - (\alpha T - \beta + \gamma)) = \dfrac{\beta}{\alpha}.
\end{equation}
Note that here we calculated $X^{\text{SS}}$ in terms of $T$. Nevertheless, even if either $X(0)$ or $Y(0)$ is chosen as an input, $T$ also goes to infinity as $\lambda$ goes to infinity. Thus, the limit $\lim_{\lambda \to \infty}X^{\text{SS}}(\lambda)$ is positive and finite, implying that the species $X$ exhibits aACR with respect to either $X$ or $Y$ according to Definition~\ref{def:aACR}.

\begin{remark}
Note that we have an even stronger result in this case. Specifically, if we compute the steady-state variety over all of $\mathbb{R}^2$ and separate the stable and unstable varieties, then as $\gamma \to 0$, the stable variety of \eqref{eq:archetypal-AsymACR} approaches uniformly to the stable variety of \eqref{eq:archetypal-ODE}. The same holds for the respective unstable varieties.
\end{remark}

\bibliographystyle{siamplain}
\bibliography{refs}
\end{document}